\newtheorem{thm}{Theorem}[section]
\newtheorem{cor}[thm]{Corollary}
\newtheorem{lem}[thm]{Lemma}
\newtheorem{prop}[thm]{Proposition}
\newtheorem{ques}[thm]{Question}
\theoremstyle{definition}
\numberwithin{equation}{section}
\begin{document}
\title[The nonexistence of expansive actions  ]{The nonexistence of expansive actions on Suslinian continua by groups of subexponential growth}

\author[B.B.~Liang]{Bingbing Liang}
\address{Soochow University, Suzhou, Jiangsu 215006, China}
\email{bbliang@suda.edu.cn}

\author[E.H.~Shi]{Enhui Shi}
\address{Soochow University, Suzhou, Jiangsu 215006, China}
\email{ehshi@suda.edu.cn}

\author[Z.W.~Xie]{Zhiwen Xie}
\address{Soochow University, Suzhou, Jiangsu 215006, China}
\email{20204007002@stu.suda.edu.cn}

\author[H.~Xu]{Hui Xu}
\address{Soochow University, Suzhou, Jiangsu 215006, China}
\email{20184007001@stu.suda.edu.cn}

\begin{abstract}
We show that if $G$ is a finitely generated group of subexponential growth and $X$ is a Suslinian continuum, then any action of $G$ on $X$ cannot be expansive. 
\end{abstract}

\maketitle

\date{\today}

\section{Introduction}

  \medskip
 By a \textit{continuum} we mean a nonempty compact connected metric space. A continuum $X$ is said to be \textit{nondegenerate} if it is not a single point.
If a continuum $X$ does not contain uncountably many mutually disjoint nondegenerate subcontinua, then $X$ is called \textit{Suslinian}. It is known that all
regular curves are Suslinian.

 \medskip
 Let $X$ be a compact metric space and  $\text{Homeo}(X)$ be the homeomorphism group of $X$. Let $G$ be a discrete group. A group homomorphism $\phi: G\rightarrow \text{Homeo}(X)$ is called a \textit {continuous action} of $G$ on $X$, which is denoted by $(X,G,\phi)$. For brevity, we usually use $gx$ or $g(x)$ in place of $\phi(g)(x)$.

\medskip
Let $(X,G,\phi)$ be a continuous action of $G$ on a compact metric space $(X,d)$. The action is said to be \textit{expansive} if there is a $c>0$ such that for any distinct points $x,y\in X$, $\sup_{g\in G}d(gx,gy)>c$. Such $c$ is called an {\textit {expansive constant}}.

 \medskip
 We are interested in the following question.

 \medskip
\begin{ques}
Given a compact metric space $X$ and a discrete group $G$, can $G$ act on $X$ expansively?
\end{ques}

 There has been intensively studied around this question. One may refer to the introduction part of \cite{WSXX} for a detailed description of the progress.

 \medskip
 The following theorem is due to H. Kato.

  \medskip
\begin{thm}[\cite{Kato}]
There are no expansive $\mathbb Z$-actions on Suslinian continua.
\end{thm}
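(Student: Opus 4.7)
A $\mathbb Z$-action on $X$ is generated by a single homeomorphism $f\colon X\to X$, and expansivity means there is $c>0$ with $\sup_{n\in\mathbb Z}d(f^n x,f^n y)>c$ whenever $x\ne y$. My plan is to contradict the Suslinian hypothesis by manufacturing uncountably many pairwise disjoint nondegenerate subcontinua of $X$, harvested from local stable or unstable continua of $f$.

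For $0<\varepsilon<c/2$, define the local stable and unstable sets
\[
V^s_\varepsilon(x)=\{y:d(f^n x,f^n y)\le\varepsilon\ \forall n\ge 0\},\qquad V^u_\varepsilon(x)=\{y:d(f^n x,f^n y)\le\varepsilon\ \forall n\le 0\},
\]
and let $C^s_\varepsilon(x)$, $C^u_\varepsilon(x)$ denote the connected components of $x$ in these sets. Expansivity gives $V^s_\varepsilon(x)\cap V^u_\varepsilon(x)=\{x\}$. The first step is to show that on a nondegenerate continuum there exists $x\in X$ for which $C^s_\varepsilon(x)$ or $C^u_\varepsilon(x)$ is itself nondegenerate; this is a classical fact in the theory of expansive homeomorphisms, the point being that if every local stable and unstable component were a singleton, then a compactness/connectivity argument would force $X$ to be totally disconnected, contradicting that $X$ is a nondegenerate continuum.

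The second step, which is the heart of the proof, upgrades one such nondegenerate stable (say) continuum $W_0=C^s_\varepsilon(x_0)$ into an uncountable pairwise disjoint family. Here one uses that $\operatorname{diam}(f^n W_0)\to 0$ as $n\to +\infty$, so the preimages $f^{-n}W_0$ have diameters bounded below and, by compactness of the hyperspace $C(X)$, accumulate onto continua of diameter comparable to $\varepsilon$. Translating the stable decomposition along this accumulation set, and using that two components $C^s_\varepsilon(x)$, $C^s_\varepsilon(y)$ are either equal or disjoint -- since a common point would force $x$ and $y$ to lie in a common forward-shadowed orbit and hence coincide by expansivity applied at suitable iterates -- I would extract a Cantor-indexed family of distinct, hence pairwise disjoint, nondegenerate stable continua. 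This contradicts the Suslinian property.

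The main obstacle is step two. Producing a single nondegenerate stable continuum is relatively short, but organising the orbit of such a continuum into an \emph{uncountable} pairwise disjoint family requires both the contracting geometry of the stable direction and a careful accumulation/Baire argument in the hyperspace, since stable continua can fail to vary continuously in the basepoint and one therefore cannot index them naively by points of $X$. The delicate harmonisation of these ingredients -- contraction under positive iterates, growth under negative iterates, and upper semicontinuity of $x\mapsto C^s_\varepsilon(x)$ -- is where the Suslinian obstruction ultimately bites.
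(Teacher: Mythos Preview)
Your route is genuinely different from the paper's. The paper does not argue via local stable/unstable continua at all; following Kato's original 1990 proof, it works in the hyperspace $C(X)$ with the transfinite sequence $M_1=C(X)$, $M_{\alpha+1}=\widetilde{M_\alpha}$, $M_\lambda=\bigcap_{\alpha<\lambda}M_\alpha$, and uses the characterisation ``$X$ Suslinian $\iff$ some $M_\alpha=\emptyset$''. The dynamical input is a counting lemma: expansivity (via the Meyerovitch--Tsukamoto metric) forces, inside any nondegenerate subcontinuum $A$, many disjoint subcontinua whose images under a single bounded-word-length element have diameter $\ge 1/(4\alpha)$. This, together with $G$-invariance of each $M_\alpha$, drives a transfinite induction showing every $M_\alpha$ contains a nondegenerate element, contradicting the Suslinian characterisation. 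No stable sets, no Cantor family; the ``uncountably many disjoint continua'' is never produced explicitly.

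Your plan, by contrast, has a concrete error and a real gap. The error is the dichotomy ``$C^s_\varepsilon(x)$ and $C^s_\varepsilon(y)$ are either equal or disjoint''. A common point $z$ gives $d(f^n x,f^n y)\le 2\varepsilon$ only for $n\ge 0$; expansivity needs the bound for all $n\in\mathbb Z$, so you cannot conclude $x=y$, and indeed the sets $C^s_\varepsilon(\cdot)$ are basepoint-dependent and do \emph{not} partition $X$. (The global stable classes $W^s(x)$ do partition, but they need not be closed or contain nondegenerate continua.) The gap is Step~2 itself: even granting one nondegenerate stable continuum $W_0$, your passage to an uncountable pairwise disjoint family is only asserted (``extract a Cantor-indexed family''), not argued. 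The iterates $f^{-n}W_0$ are typically nested or overlapping rather than disjoint, and hyperspace limits of them need not be stable continua of any point; the upper semicontinuity you invoke controls size from above, not disjointness. This is precisely the step Kato's $M_\alpha$ machinery was designed to circumvent, because producing an uncountable disjoint family directly is hard on continua without local connectedness or hyperbolic structure. If you want to salvage the stable-set approach you would need a substantially different mechanism for disjointness (e.g.\ an equivalence relation whose classes are continua of definite size), and that is not in your sketch.
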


 \medskip
 The purpose of the paper is to extend Kato's theorem to actions by groups of subexponential growth.
Let $H$ be a finitely generated group and $S$ be a finite set of generators. For each $h\in H$,  denote by $|h|$ the word length with respect to $S$. 
For $k\in \mathbb{N}$, define
\[\beta(H,S; k)= \#\{h\in H:~|h|\leq k\},\]
 which is called the \textit{growth function} of $H$ with respect to $S$. If
\[\lim_{k\rightarrow \infty}\sqrt[k]{\beta(H,S;k)}\leq1,\] then $H$ is said to be of \textit{subexponential growth}.
One may consult \cite[Chapter 6]{CC} for the  example of a group which is of subexponential growth but not of polynomial growth.

\medskip
The following is the main theorem of the paper.

\medskip
\begin{thm}\label{main thm}
Let $G$ be a finitely generated group of subexponential growth and $X$ be a Suslinian continuum. Then $G$ cann't act on
$X$ expansively.
\end{thm}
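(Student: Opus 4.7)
The plan is to argue by contradiction and to build an uncountable, pairwise disjoint family of nondegenerate subcontinua of $X$, in violation of the Suslinian hypothesis. Suppose $\phi\colon G \to \text{Homeo}(X)$ is expansive with constant $c$, fix a finite symmetric generating set $S$, and write $B(n) = \{g \in G : |g| \le n\}$. Choose a finite open cover $\mathcal U$ of $X$ with mesh less than $c/3$, so that the joined covers $\mathcal U_n = \bigvee_{g \in B(n)} g^{-1}\mathcal U$ satisfy $\mathrm{mesh}(\mathcal U_n) \to 0$ by expansiveness.

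The central analytic object is, for each $x \in X$ and each integer $n \ge 1$, the local continuum $C_n(x) \subseteq X$ defined as the connected component of $x$ in
\[
  U_n(x) = \{y \in X : d(gx, gy) \le c/3 \text{ for all } g \in B(n)\}.
\]
The sequence $\{C_n(x)\}_{n\ge 1}$ is a nested family of subcontinua through $x$ with intersection $\{x\}$, so $\mathrm{diam}(C_n(x)) \to 0$ (uniformly in $x$ by a Lebesgue-number argument). The crucial step is then a Kato-style dichotomy adapted to the group setting: produce a nondegenerate subcontinuum $A \subset X$ and a sequence $(g_i) \subset G$ with $|g_i| \to \infty$ such that $\mathrm{diam}(g_i A) \le c/2$ for all $i$. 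In Kato's $\mathbb Z$-case this is exactly the existence of a nondegenerate stable or unstable continuum; in our setting the subexponential growth of $G$ must play the role of the forward/backward asymmetry. The plan is a pigeonhole argument: the number of nonempty elements of $\mathcal U_n$ is at most $|\mathcal U|^{|B(n)|}$, a subexponential function of $n$, and comparing this against the continuum-theoretic richness of the Suslinian $X$ should force a nondegenerate branch to persist across infinitely many levels, yielding $A$.

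Given such $A$, I would pass from a single nondegenerate continuum to uncountably many pairwise disjoint ones. Amenability of $G$ (a consequence of subexponential growth) supplies a $\phi$-invariant Borel probability measure $\mu$ on $X$. By a Poincar\'e recurrence argument applied to the shrinking families $\{C_n(x)\}$ and the sequence $(g_i)$, $\mu$-almost every $x \in X$ should carry its own nondegenerate ``slow'' subcontinuum. Distinct slow continua through $\mu$-generic points would then be disjoint (they represent different equivalence classes under the relation ``stays $c/3$-close along a cofinite subset of $G$''), and one extracts uncountably many pairwise disjoint nondegenerate subcontinua, contradicting that $X$ is Suslinian.

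The main obstacle is the middle step, namely the dichotomy producing $A$. In Kato's proof the $\mathbb Z$-action provides a natural past and future that immediately splits the small-orbit candidates into stable and unstable pieces; for a general finitely generated group no such ordering exists, and one must instead work with the exhaustion $B(n) \nearrow G$. The subexponential growth hypothesis should be used in two complementary ways: algebraically it forbids free subsemigroups in $G$, ruling out free hyperbolic-type dynamics, while analytically the subexponentiality of $|B(n)|$ enables the pigeonhole step above. Turning these heuristics into a rigorous construction of $A$, without incurring the exponential blow-up that would destroy the pigeonhole, is where the main technical difficulty will lie.
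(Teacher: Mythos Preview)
Your proposal has a genuine gap at exactly the place you flag, and the counting heuristic you offer there is in fact incorrect. You write that the number of nonempty cells of $\mathcal U_n=\bigvee_{g\in B(n)}g^{-1}\mathcal U$ is at most $|\mathcal U|^{|B(n)|}$, ``a subexponential function of $n$''. It is not: subexponential growth of $G$ only says $|B(n)|=e^{o(n)}$, so $|\mathcal U|^{|B(n)|}=\exp\bigl(|B(n)|\log|\mathcal U|\bigr)$ is typically super-exponential in $n$ (already for $G=\mathbb Z^2$ it is of order $c^{\,n^2}$). So this pigeonhole does not bite, and nothing in your sketch produces the ``slow'' continuum $A$. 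The downstream steps are also not justified: having $\mathrm{diam}(g_iA)$ small along a mere sequence $g_i$ is much weaker than a stable set, and there is no reason the putative ``slow continua'' through distinct $\mu$-generic points should be pairwise disjoint---the equivalence relation you describe does not force that. As you correctly anticipate, the stable/unstable dichotomy is a genuinely $\mathbb Z$-specific phenomenon and no replacement for it is supplied.

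The paper avoids this entirely by running the comparison in the opposite direction. Using the Meyerovitch--Tsukamoto metric one gets $\alpha>1$ and a compatible metric $D$ so that any pair at $D$-distance $\ge\alpha^{-n}$ is pushed to distance $\ge\tfrac{1}{4\alpha}$ by some $g$ with $|g|\le n$. Since a nondegenerate continuum has lower box dimension $\ge 1$, any nondegenerate $A\subset X$ contains at least $\alpha^{n/2}/\sqrt{3}$ pairwise disjoint subcontinua of $D$-diameter $\ge\alpha^{-n}$; subexponential growth gives $|B(n)|\le\alpha^{n/3}$ for large $n$, so by pigeonhole a \emph{single} $g_n\in B(n)$ sends at least $\alpha^{n/6}/\sqrt{3}$ of them to diameter $\ge\tfrac{1}{4\alpha}$. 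This ``many disjoint subcontinua blown up by one group element'' statement (not a stable-set statement) is exactly what feeds into Kato's hyperspace machinery: using the transfinite sequence $M_\alpha\subset C(X)$ and Kato's characterization that $X$ is Suslinian iff some $M_\alpha=\emptyset$, one shows by induction on ordinals that every $M_\alpha$ contains a nondegenerate element, a contradiction. The correct pigeonhole is thus $\alpha^{n/2}$ versus $|B(n)|$, not anything versus $|\mathcal U|^{|B(n)|}$.
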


\medskip
Since every finitely generated nilpotent group is of subexponential growth, the following corollary is immediate.

\medskip
\begin{cor}
Let $G$ be a finitely generated nilpotent group and $X$ be a Suslinian continuum. Then $G$ cann't act on
$X$ expansively.
\end{cor}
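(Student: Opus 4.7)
The plan is to argue by contradiction. Assume $(X,G,\phi)$ is expansive with constant $c>0$; fix a finite symmetric generating set $S$ for $G$ with word balls $B_n=\{g\in G:|g|\le n\}$, and define the Bowen-type metric $d_n(x,y)=\max_{g\in B_n}d(gx,gy)$, so expansivity says $\sup_n d_n(x,y)>c$ for distinct $x,y$. The aim is to produce uncountably many pairwise disjoint nondegenerate subcontinua of $X$, contradicting the Suslinian hypothesis.

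The first observation is that expansivity forces continuum-wise expansivity with the same constant $c$: any nondegenerate subcontinuum $A$ contains two distinct points $x,y$, and some $g\in G$ witnesses $d(gx,gy)>c$, hence $\operatorname{diam}(gA)>c$. Using this, for each $x\in X$ I would define the nested continua $K_n(x)$ to be the connected component of $x$ in the closed set $\{y:d_n(x,y)\le c/3\}$. Then $K_0(x)\supseteq K_1(x)\supseteq\cdots$ with $\bigcap_n K_n(x)=\{x\}$, and the ``escape times'' $N(A):=\min\{|g|:\operatorname{diam}(gA)\ge c\}$ of every nondegenerate subcontinuum are finite.

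The heart of the argument is a Cantor-scheme construction. Starting from a well-chosen basepoint, I would inductively construct a binary tree $(A_\sigma)_{\sigma\in 2^{<\omega}}$ of nondegenerate subcontinua whose two children $A_{\sigma 0}$ and $A_{\sigma 1}$ are disjoint, produced by locating a group element $g_\sigma\in B_{n_\sigma}$ that cleaves the parent continuum into two components separated by at least $c/3$. In the $\mathbb{Z}$-action case handled by Kato, the requisite Cantor scheme arises naturally from the stable/unstable decomposition; for a general group $G$ no such linear-time decomposition is available, and this is where subexponential growth must intervene. Because $|B_n|$ grows subexponentially while the moduli of continuity of elements of $B_n$ are controlled by iterated compositions of the generators, a pigeon-hole argument over $B_n$ should ensure that the cleavings at successive scales do not become combinatorially obstructed, so that infinitely many branches survive with distinct limit points.

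The main technical obstacle is verifying that the branches of the Cantor scheme produce an uncountable family of genuinely \emph{pairwise disjoint} nondegenerate subcontinua in the limit. At each finite stage disjointness is clear from how the cleaving is defined, but passing to infinite branches demands that diameters along distinct branches remain bounded below while the ``overlap'' between different branches vanishes. I expect that the quantitative control $|B_n|\le e^{o(n)}$ afforded by subexponential growth, together with the low-dimensional local structure forced by the Suslinian hypothesis (in particular, hereditary decomposability), is exactly what makes this control possible, and that formalizing this interplay — replacing the stable/unstable dichotomy of Kato with a word-metric cleaving argument — is the delicate step of the proof.
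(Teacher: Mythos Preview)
The paper's proof of this corollary is one sentence: finitely generated nilpotent groups have polynomial, hence subexponential, growth, so Theorem~\ref{main thm} applies directly. You are instead attempting to reprove the main theorem from scratch, and your sketch has a genuine gap at exactly the point you yourself flag as ``delicate.''

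A Cantor scheme of nondegenerate subcontinua does not by itself yield uncountably many pairwise disjoint nondegenerate subcontinua. Along each infinite branch you obtain a nested sequence whose intersection is a continuum, but nothing forces the intersections along distinct branches to be disjoint unless diameters shrink to zero --- in which case those intersections are points, not nondegenerate continua. The subexponential bound $|B_n|\le e^{o(n)}$ limits how many group elements compete at each scale, but it offers no leverage on this disjointness-versus-nondegeneracy tension, and your appeal to ``cleaving'' and ``hereditary decomposability'' does not supply the missing mechanism. The paper sidesteps the problem entirely: it never constructs an uncountable disjoint family. It combines the Meyerovitch--Tsukamoto compatible metric (Proposition~\ref{compatible metric}), a box-dimension count giving more than $\alpha^{n/2}$ disjoint subcontinua of scale $\alpha^{-n}$ inside any nondegenerate continuum (Lemma~\ref{disjoint subcontinua}), and pigeonhole against $\beta(G,S;n)\le\alpha^{n/3}$ to find a \emph{single} group element that simultaneously expands arbitrarily many of them (Claim~A). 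This feeds Kato's ordinal characterization (Theorem~\ref{suslin}): a transfinite induction shows every $M_\alpha$ contains a nondegenerate element, and that step needs only arbitrarily large \emph{finite} disjoint families near a given continuum --- never an uncountable one.
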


\medskip
The proof of the main theorem relies on a comparison between the growth rate of pairwise disjoint subcontinua of unform scales and 
the growth rate of the acting group. Meanwhile, we use a key lemma established by T. Meyerovitch and M. Tsukamoto, and follows the same line as in H. Kato's proof
for $\mathbb Z$-actions. 

\section{Preliminaries}

\subsection{Meyerovitch-Tsukamoto's Lemma}\

\medskip
The following Proposition is due to T. Meyerovitch and M. Tsukamoto.
\medskip
\begin{prop}~\cite[Lemma 4.4]{MT}\label{compatible metric}
Let $G$ be a finitely generated discrete group and $\phi:G\rightarrow \text{Homeo}(X)$ be a continuous action of $G$ on a compact metric space $(X,d)$. If the action is expansive, then there exist $\alpha>1$ and a compatible metric $D$ on $X$ such that for any positive integer $n$ and any two distinct points $x,y\in X$ satisfying $D(x,y)\geq \alpha^{-n}$, we have
\[\max_{g\in G, |g|\leq n} D(gx,gy)\geq \frac{1}{4\alpha}.\]
\end{prop}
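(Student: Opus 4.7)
The plan is to construct $D$ in the spirit of the Fathi--Reddy adapted metrics for expansive $\mathbb Z$-actions, generalized to a finitely generated group. Let $c>0$ be an expansive constant for the original metric $d$ and, for each $n\ge 0$, set $d_n(x,y)=\max_{|g|\le n}d(gx,gy)$. These are continuous, monotone in $n$, and by expansivity the closed sets $\{d_n\le c\}$ descend to the diagonal; a standard compactness argument then yields, for every $\varepsilon>0$, an integer $N(\varepsilon)$ such that $d_{N(\varepsilon)}(x,y)\le c$ forces $d(x,y)<\varepsilon$. This quantitative statement, that orbit-separation up to word-length $N(\varepsilon)$ dictates pointwise separation, is the engine of the proof.

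For a parameter $\alpha>1$ to be calibrated at the end, the candidate adapted metric is
\[
D(x,y)\;=\;\sup_{n\ge 0}\alpha^{-n}\min\!\Bigl(\tfrac{d_n(x,y)}{c},\,1\Bigr).
\]
Symmetry and the triangle inequality follow from $d_n(x,z)\le d_n(x,y)+d_n(y,z)$ and subadditivity of $t\mapsto\min(t,1)$; positivity off the diagonal follows from expansivity; and compatibility with the original topology comes from the comparison $D\ge\min(d/c,1)$ together with the usual tail argument splitting the supremum into the finite range $\{n\le N\}$ (handled by continuity of each $g$ with $|g|\le N$) and its complement (uniformly controlled by $\alpha^{-N}\operatorname{diam}X$).

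The scaling property is the heart of the matter. Given distinct $x,y$ with $D(x,y)\ge\alpha^{-n}$, let $m^\ast$ be an essentially attaining index in the supremum. A short calculation shows $m^\ast\le n+O(1)$, and there is $h^\ast\in G$ with $|h^\ast|\le m^\ast$ and $d(h^\ast x,h^\ast y)\gtrsim c\,\alpha^{m^\ast-n}$. When $|h^\ast|\le n$ one takes $g=h^\ast$ directly; when $|h^\ast|>n$ one splits off the terminal $n$ letters in a shortest word for $h^\ast$, writing $h^\ast=h'g$ with $|g|\le n$ and $|h'|\le|h^\ast|-n$, and then uses the translation bound $D(gx,gy)\ge \alpha^{-|h'|}\min(d(h^\ast x,h^\ast y)/c,1)$ to conclude. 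The delicate case, which I expect to be the main obstacle, is $m^\ast\ll n$: here the naive bound only produces $D(gx,gy)\gtrsim \alpha^{m^\ast-n}$, potentially far smaller than the target $1/(4\alpha)$. The resolution is to feed the modest separation $d(h^\ast x,h^\ast y)\gtrsim c\,\alpha^{m^\ast-n}$ back into the engine $N(\cdot)$, producing an auxiliary element of bounded word length that boosts the separation up to the uniform scale $c$, and then to choose $\alpha$ large enough against $c$, $\operatorname{diam}X$, and the growth of $N(\cdot)$ so that the combined word length still stays within $n$ while the resulting $D$-separation clears the threshold $1/(4\alpha)$.
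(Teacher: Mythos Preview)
Your construction of $D$ as a genuine metric is fine, and you correctly isolate the delicate case $m^\ast\ll n$. The gap is in your proposed resolution of that case. Boosting $d(h^\ast x,h^\ast y)\gtrsim c\,\alpha^{m^\ast-n}$ up to scale $c$ via the engine $N(\cdot)$ costs an auxiliary element of word length $\le N(c\,\alpha^{m^\ast-n})$; for the combined word length to stay within $n$ you need $N(c\,\alpha^{-k})\le k$ for every $k\ge 1$, i.e.\ $N(\varepsilon)\lesssim \log_\alpha(c/\varepsilon)$. But $N$ is fixed by the action and the original metric $d$, and nothing in expansivity forces $N(\varepsilon)$ to grow only logarithmically in $1/\varepsilon$; it can grow arbitrarily fast. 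No single choice of $\alpha$ can absorb super-logarithmic growth of $N$: the inequality $N(c\alpha^{-k})\le k$ then simply fails for large $k$. The underlying issue is that your $D$ remembers the actual $d$-values below the threshold $c$ through the factor $\min(d_n/c,1)$; this buys you the triangle inequality for free, but it also lets small-scale $d$-information dominate the supremum (the $m^\ast=0$ term already gives $D\ge d/c$) and destroys the desired scaling.

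The paper sidesteps this entirely. It sets $\rho(x,y)=\alpha^{-n(x,y)}$, where $n(x,y)$ is the first word length at which the orbit separates to scale $c$, and uses a \emph{single} doubling step (from $c/2$ to $c$ in at most $l:=N(c/2)$ steps, with $\alpha$ chosen so that $\alpha^{l}<2$) to prove the quasi-triangle inequality $\rho(x,z)\le 2\max\{\rho(x,y),\rho(y,z)\}$. Frink's metrization lemma then yields a genuine compatible metric $D$ with $\rho/4\le D\le\rho$. With this $D$ the scaling is immediate: $D(x,y)\ge\alpha^{-n}$ forces $\rho(x,y)\ge\alpha^{-n}$, hence $n(x,y)\le n$, hence some $h$ with $|h|\le n$ has $d(hx,hy)\ge c$, so $\rho(hx,hy)\ge\alpha^{-1}$ and $D(hx,hy)\ge 1/(4\alpha)$. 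The point is that $\rho$ depends only on the threshold-crossing time, not on the fine $d$-geometry below threshold, and Frink's lemma packages the failure of the triangle inequality into the harmless factor $1/4$.
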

Here we remark that although the above proposition is established only for $\mathbb{Z}^k$-actions in \cite[Lemma 4.4]{MT}, a careful checking will
 show that it holds for any finitely generated group action and the proof is completely the same. For the convenience  of the readers, we add
 the proof in the appendix.

\subsection{Kato's characterization of Suslinian continuum}\

\medskip
Let $X$ be a continuum.  The {\it hyperspace} $C(X)$  is the set of all subcontinua of $X$.  For  $A,B\in C(X)$, define
\[d_H(A,B)=\inf\{\delta>0:~A\subset N_{\delta}(B)~~\text{and}~~B\subset N_{\delta}(A)\},\]
where $N_{\delta}(A)$ denotes the $\delta$-neighborhood of $A$ in $X$. Then $d_H$ is a metric on $C(X)$ and is called the {\it Hausdorff metric}.
It is known that  $(C(X), d_H)$ is a continuum \cite[Chapter IV]{Nadler}.

\medskip
For any subset $M$ of $C(X)$, define
\begin{eqnarray*}
\widetilde{M}=\{ A\in C(X)&:& \text{for any }~~ \varepsilon>0~~\text{and}~~ k\in\mathbb{N},~~\text{there exist pairwise disjoint nondegenerate }\\ &&
\text{ subcontinua}~~A_1,A_2,\cdots,A_k\in M~~
\text{such that}~~ d_{H}(A,A_i)<\varepsilon\}.
\end{eqnarray*}
Set $M_1= C(X)$. For every ordinal $\alpha$, let $M_{\alpha+1}=\widetilde{M_{\alpha}}$; for every limit ordinal $\lambda$, let $M_{\lambda}=\bigcap_{\alpha<\lambda}M_{\alpha}$.
By the definition, we see that the family $\{M_\alpha\}$ is decreasing with respect to $\alpha$.
\medskip
In \cite{Kato}, Kato proved the following theorem as a characterization of Suslinian continua.

\begin{thm}\cite[Theorem 3.4]{Kato}\label{suslin}
A continuum $X$ is Suslinian if and only if $M_{\alpha}=\emptyset$ for some countable ordinal $\alpha$.
\end{thm}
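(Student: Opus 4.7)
The plan is to prove each direction by contrapositive and to exploit the fact that $C(X)$, being a compact metric space, is separable. I would begin by recording three simple properties of the operator $\widetilde{\cdot}$: monotonicity ($M\subseteq M'$ implies $\widetilde{M}\subseteq\widetilde{M'}$); the containment $\widetilde{M}\subseteq\overline{M}$ (take $k=1$ in the definition); and closedness of $\widetilde{M}$ in $C(X)$ (a triangle inequality in the Hausdorff metric). It follows that each $M_\alpha$ is closed, and since any strictly decreasing chain of closed sets in a second-countable space has countable length, the chain $(M_\alpha)$ stabilizes at some countable ordinal $\alpha_*$.

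For the direction ``$M_\alpha=\emptyset$ for some countable $\alpha$ implies $X$ Suslinian,'' I would argue the contrapositive. Assume $X$ is not Suslinian and fix an uncountable family $\mathcal{F}\subseteq C(X)$ of pairwise disjoint nondegenerate subcontinua. Let $\mathcal{P}$ be the set of condensation points of $\mathcal{F}$ in $C(X)$. Standard arguments in second-countable spaces show that $\mathcal{P}$ is closed, nonempty, and that $\mathcal{F}\setminus\mathcal{P}$ is countable. The crucial claim is $\mathcal{P}=\widetilde{\mathcal{P}}$: given $A\in\mathcal{P}$, $\varepsilon>0$ and $k\in\mathbb{N}$, the uncountable set $B(A,\varepsilon)\cap\mathcal{F}$ loses only countably many points on removing $\mathcal{F}\setminus\mathcal{P}$, so it contains uncountably many elements of $\mathcal{F}\cap\mathcal{P}$; any $k$ of them are pairwise disjoint nondegenerate witnesses from $\mathcal{P}$ for membership of $A$ in $\widetilde{\mathcal{P}}$. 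The reverse inclusion is just $\widetilde{\mathcal{P}}\subseteq\overline{\mathcal{P}}=\mathcal{P}$. A transfinite induction, using monotonicity at successor steps and intersection at limit steps, then gives $\mathcal{P}\subseteq M_\alpha$ for all ordinals $\alpha$, so $M_\alpha\neq\emptyset$ always, contradicting the hypothesis.

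For the reverse direction, I would assume $X$ Suslinian and show the stable set $M:=M_{\alpha_*}$ is empty. If not, $M=\widetilde{M}$ is nonempty, and I would construct a binary Cantor scheme $\{K_s\}_{s\in 2^{<\omega}}\subseteq M$ of nondegenerate subcontinua by recursively applying $M=\widetilde{M}$ with $k=2$, subject to three quantitative conditions for a carefully chosen summable sequence $\varepsilon_n\downarrow 0$: (i) $d_H(K_s,K_{si})<\varepsilon_{|s|+1}$ for $i=0,1$; (ii) the $2^n$ continua at each level are pairwise disjoint, enforced by taking $\varepsilon_{n+1}$ much smaller than the minimum pairwise distance already present at level $n$; (iii) the diameters of the $K_s$ remain bounded below by a fixed positive constant, which is feasible because diameter is $2\varepsilon$-Lipschitz in Hausdorff distance. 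For each $\sigma\in 2^\omega$ the sequence $(K_{\sigma|n})$ is then Hausdorff-Cauchy and converges to a nondegenerate subcontinuum $K_\sigma\subseteq X$, and distinct $\sigma$ yield disjoint $K_\sigma$. This produces $2^{\aleph_0}$ pairwise disjoint nondegenerate subcontinua of $X$, contradicting Suslinianity.

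The main technical obstacle is the bookkeeping in the Cantor-scheme construction: one must choose the sequence $(\varepsilon_n)$ so that Cauchyness, strict positive separation between branches in the limit, and a uniform positive lower bound on diameters along every branch all survive simultaneously. The freedom to shrink $\varepsilon_{n+1}$ at each stage after inspecting the continua already chosen makes this possible, but the geometric-series estimates and the minimum-distance estimates have to be balanced with care so that none of the three conditions is lost in the Hausdorff limit.
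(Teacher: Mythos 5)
The paper itself offers no proof of this statement --- it is quoted directly from Kato (\cite{Kato}, Theorem 3.4) --- so there is no in-paper argument to compare with; judged on its own merits, your proposal is a correct and essentially self-contained reconstruction of the standard ``transfinite derivative'' argument behind Kato's characterization. Both directions check out. The preliminary facts (monotonicity of $M\mapsto\widetilde{M}$, $\widetilde{M}\subseteq\overline{M}$, closedness of $\widetilde{M}$) are right, and with second countability of $C(X)$ they do force stabilization of the decreasing chain $(M_\alpha)$ at a countable ordinal $\alpha_*$. In the non-Suslinian case your condensation-point set $\mathcal{P}$ of an uncountable pairwise disjoint family $\mathcal{F}$ indeed satisfies $\mathcal{P}\subseteq\widetilde{\mathcal{P}}$, because the witnesses can be taken in $\mathcal{F}\cap\mathcal{P}$, where they are automatically distinct, hence pairwise disjoint, and nondegenerate; transfinite induction then gives $\emptyset\neq\mathcal{P}\subseteq M_\alpha$ for every $\alpha$, as you say. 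For the converse, if $M:=M_{\alpha_*}\neq\emptyset$ then $M=\widetilde{M}$, and your Cantor scheme goes through; the only points worth making explicit are that (a) the scheme can be started because $M=\widetilde{M}$ with $k=1$ already forces $M$ to contain a nondegenerate element, (b) the Hausdorff limit along each branch is again a subcontinuum (compactness of $C(X)$), and (c) the bookkeeping you flag is genuinely available: choosing $\varepsilon_{n+1}$ only after the level-$n$ continua are fixed, with $\varepsilon_{n+1}$ at most a small fixed fraction of both $\varepsilon_n$ and the minimal distance between the (finitely many, pairwise disjoint, compact) level-$n$ sets, simultaneously yields Hausdorff--Cauchyness, the uniform lower diameter bound via $|\operatorname{diam}(A)-\operatorname{diam}(B)|\le 2\,d_H(A,B)$, and disjointness of the limit continua along distinct branches by comparing the geometric tail sum with the gap at the splitting node. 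So the proposal is sound; it is, in substance, the same derivative/Cantor-scheme argument as Kato's original proof rather than a genuinely different route.
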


\medskip
If $(X,G,\phi)$ is a continuous action on continuum $X$ by a group $G$ , then this action naturally induces a continuous action
$(C(X),G,\tilde{\phi})$ by defining $\tilde{\phi}(g)(A)=\phi(g)(A)$ for $g\in G$ and $A\in C(X)$.

\medskip
By the definition of $M_\alpha$, we immediately have

\begin{prop}\label{inv}
Every $M_\alpha$ is a $G$-invariant compact subset of $C(X)$ under the action $(C(X),G,\tilde{\phi})$.
\end{prop}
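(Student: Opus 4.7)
The plan is to prove the statement by transfinite induction on $\alpha$, establishing simultaneously at each stage that $M_\alpha$ is closed in $C(X)$ (hence compact, since $C(X)$ is) and $G$-invariant under $\tilde\phi$. The base case $\alpha=1$ is trivial because $M_1=C(X)$.

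For the successor step, assume $M_\alpha$ is closed and $G$-invariant and consider $M_{\alpha+1}=\widetilde{M_\alpha}$. To check closedness, I would take a sequence $A_n\in\widetilde{M_\alpha}$ with $A_n\to A$ in the Hausdorff metric. Given $\varepsilon>0$ and $k\in\mathbb{N}$, I would fix $n$ so large that $d_H(A_n,A)<\varepsilon/2$, apply the defining property of $\widetilde{M_\alpha}$ to $A_n$ with parameter $\varepsilon/2$ to obtain pairwise disjoint nondegenerate $A_1,\dots,A_k\in M_\alpha$ with $d_H(A_n,A_i)<\varepsilon/2$, and conclude by the triangle inequality that $d_H(A,A_i)<\varepsilon$; thus $A\in\widetilde{M_\alpha}$.

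For $G$-invariance at the successor step, fix $A\in\widetilde{M_\alpha}$ and $g\in G$. Since $\phi(g)\in\mathrm{Homeo}(X)$ is uniformly continuous on the compact space $X$, the induced map $\tilde\phi(g)$ is uniformly continuous on $(C(X),d_H)$; thus for any $\varepsilon>0$ there is $\delta>0$ so that $d_H(B,C)<\delta$ implies $d_H(gB,gC)<\varepsilon$. Given $\varepsilon$ and $k$, I would then pick pairwise disjoint nondegenerate $A_1,\ldots,A_k\in M_\alpha$ within $\delta$ of $A$; their images $gA_1,\ldots,gA_k$ are still pairwise disjoint nondegenerate subcontinua, lie in $M_\alpha$ by the inductive hypothesis on $G$-invariance, and satisfy $d_H(gA,gA_i)<\varepsilon$, so $gA\in\widetilde{M_\alpha}$.

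The limit step is immediate: $M_\lambda=\bigcap_{\alpha<\lambda}M_\alpha$ is a (possibly transfinite) intersection of closed $G$-invariant subsets of $C(X)$, and such intersections remain closed and $G$-invariant. The only non-routine piece is really the successor closedness argument, and even there the only subtlety is the bookkeeping with $\varepsilon/2$ to ensure the approximating family of disjoint nondegenerate subcontinua survives in the limit; both $G$-invariance and the limit case are essentially formal.
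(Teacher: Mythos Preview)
Your proposal is correct. The paper does not actually supply a proof of this proposition; it simply asserts that the statement follows immediately from the definition of $M_\alpha$, and your transfinite induction is exactly the routine verification that makes this precise.
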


\subsection{Pairwise disjoint subcontinua of uniform scales}\

\medskip
The following lemma is known as the Boundary Bumping Theorem.

\medskip
\begin{lem}\cite[Theorem 5.4]{Nadler}\label{bumping thm}
Let $X$ be a continuum, and let $U$ be a nonempty, proper, open subset of $X$. If $K$ is a connected component of $\overline{U}$, then $K\cap Bd(U)\neq\emptyset$.
\end{lem}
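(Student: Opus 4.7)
The plan is to argue by contradiction. I would assume $K\cap Bd(U)=\emptyset$; since $U$ is open in $X$ we have $\overline{U}=U\cup Bd(U)$, so this assumption forces $K\subseteq U$. The strategy is then to produce a clopen subset $C$ of $X$ with $\emptyset\neq K\subseteq C\subseteq U\subsetneq X$, which contradicts the connectedness of the continuum $X$.

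To produce $C$, I would invoke the classical fact that in a compact Hausdorff space every connected component coincides with its quasi-component. Applied to the compact space $\overline{U}$, this yields $K=\bigcap\mathcal{F}$, where $\mathcal{F}$ denotes the family of all clopen subsets of $\overline{U}$ containing $K$. Because $K\cap Bd(U)=\emptyset$, the compact family $\{F\cap Bd(U):F\in\mathcal{F}\}$ has empty total intersection, so by the finite intersection property finitely many members $F_1,\dots,F_n\in\mathcal{F}$ already satisfy $F_1\cap\cdots\cap F_n\cap Bd(U)=\emptyset$. Setting $C:=F_1\cap\cdots\cap F_n$, we obtain a clopen subset of $\overline{U}$ with $K\subseteq C\subseteq U$.

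The last step is to upgrade the clopenness of $C$ from $\overline{U}$ to $X$. Closedness in $X$ is immediate because $\overline{U}$ is closed in $X$. For openness, I would write $C=V\cap\overline{U}$ with $V$ open in $X$; the inclusion $C\subseteq U$ then gives $C=V\cap U$, which is open in $X$. Thus $C$ is a nonempty proper clopen subset of the connected space $X$, the desired contradiction.

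I expect the only nontrivial ingredient to be the component/quasi-component identification in the compact space $\overline{U}$; everything else is routine bookkeeping with the decomposition $\overline{U}=U\cup Bd(U)$ and the transfer of clopen sets from $\overline{U}$ to $X$ via $C\subseteq U$. An alternative route, avoiding the quasi-component theorem, would be to consider the family of all subcontinua of $\overline{U}$ that contain $K$ and disjoint from $Bd(U)$ and apply a Zorn-type maximality argument together with Lemma-style boundary reasoning; however the quasi-component approach seems cleanest.
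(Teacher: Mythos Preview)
Your argument is correct and is essentially the standard proof of the Boundary Bumping Theorem. Note, however, that the paper does not supply its own proof of this lemma; it simply cites it as \cite[Theorem 5.4]{Nadler}, so there is nothing to compare against beyond observing that your quasi-component approach is precisely the classical route taken in Nadler's textbook.
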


\medskip
Let $(X,d)$ be a compact metric space.
For a subset $E$ of $X$ and $\varepsilon>0$, we say $E$ is $\varepsilon$-\textit{separated} if for any distinct $x,y\in X$, $d(x,y)\geq \varepsilon$. Let $S(\varepsilon)$ denote the cardinality of a maximal $\varepsilon$-separated subset of $X$. The \textit{lower box dimension} of $X$ is defined to be

\begin{equation*}
\underline{\dim}_B(X,d)=\liminf_{\varepsilon\rightarrow 0} \frac{\log S(\varepsilon)}{\log(1/\varepsilon)}.
\end{equation*}

\begin{lem}\label{disjoint subcontinua}
Let $X$ be a nondegenerate continuum. Then there exists $\varepsilon_0>0$ such that for any $\varepsilon \in (0,\varepsilon_0)$, there are more than $1/\sqrt{\varepsilon}$ pairwise disjoint subcontinua whose diameters $\geq \varepsilon/3$.
\end{lem}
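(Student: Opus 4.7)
I will build the required subcontinua by applying the Boundary Bumping Theorem (Lemma~\ref{bumping thm}) to small open balls centered at a well-spread finite set of points. To exhibit enough such centers I exploit the interplay of nondegeneracy and connectedness, which produces, for every sufficiently small $\varepsilon$, an $\varepsilon$-separated subset of cardinality at least $\mathrm{diam}(X)/\varepsilon$. Concretely, set $D := \mathrm{diam}(X) > 0$ and pick $x_0, x_* \in X$ with $d(x_0, x_*) = D$. The continuous ``height function'' $\psi(z) := d(x_0, z)$ satisfies $\psi(x_0) = 0$ and $\psi(x_*) = D$, so by connectedness of $X$ we get $\psi(X) = [0, D]$. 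Setting $N := \lfloor D/\varepsilon \rfloor$, I choose points $z_i \in \psi^{-1}(i\varepsilon)$ for $i = 0, 1, \ldots, N$, and the triangle inequality gives $d(z_i, z_j) \geq |\psi(z_i) - \psi(z_j)| = |i-j|\varepsilon \geq \varepsilon$ for $i \neq j$.

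Next I inflate each $z_i$ to a subcontinuum. Assuming $\varepsilon < D$, each open ball $U_i := B(z_i, \varepsilon/3)$ is a proper open subset of $X$, since $\mathrm{diam}(U_i) \leq 2\varepsilon/3 < D$. Let $K_i$ be the connected component of $z_i$ in $\overline{U_i}$; this is a nonempty subcontinuum of $X$. Lemma~\ref{bumping thm} supplies a point $y_i \in K_i \cap Bd(U_i)$. Since $U_i$ is open and does not exhaust $X$, any boundary point satisfies $d(z_i, y_i) = \varepsilon/3$, which forces $\mathrm{diam}(K_i) \geq \varepsilon/3$. Furthermore $K_i \subseteq \overline{U_i} \subseteq \{y : d(z_i, y) \leq \varepsilon/3\}$, and since the centers are pairwise at distance $\geq \varepsilon > 2(\varepsilon/3)$, the closed balls $\overline{U_i}$ are pairwise disjoint, hence so are the $K_i$.

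Setting $\varepsilon_0 := \min\{D, D^2\}$, for every $\varepsilon \in (0, \varepsilon_0)$ the construction yields $N+1 = \lfloor D/\varepsilon \rfloor + 1 > D/\varepsilon > 1/\sqrt{\varepsilon}$ pairwise disjoint subcontinua of diameter $\geq \varepsilon/3$, the last strict inequality using $\varepsilon < D^2$. I do not expect any real obstacle: the key insight is that the height function $\psi$ extracts linearly many $\varepsilon$-separated points, which beats the required $1/\sqrt{\varepsilon}$ with room to spare. The only minor technicality is to verify that a boundary point of a proper open metric ball lies on the sphere of the same radius, and this is immediate from writing the boundary as the closure minus the (open) ball.
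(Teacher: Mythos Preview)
Your argument is correct, and it takes a genuinely different route to the $\varepsilon$-separated set than the paper does. The paper invokes dimension theory: since a nondegenerate continuum has topological dimension $\geq 1$ and topological dimension is bounded above by lower box dimension, one gets $\underline{\dim}_B(X)\geq 1$ and hence $S(\varepsilon)>1/\sqrt{\varepsilon}$ for small $\varepsilon$; from that point on the paper proceeds exactly as you do, taking components of closed $\varepsilon/3$-balls around the separated points and applying the Boundary Bumping Theorem.

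Your construction of the separated set is more elementary and more explicit: the $1$-Lipschitz height function $\psi(z)=d(x_0,z)$ maps the connected set $X$ onto $[0,D]$, so level sets at heights $0,\varepsilon,2\varepsilon,\dots$ immediately yield $\lfloor D/\varepsilon\rfloor+1$ points that are $\varepsilon$-separated by the reverse triangle inequality. This bypasses the appeal to Hurewicz--Wallman entirely and in fact gives a linear count $\gtrsim D/\varepsilon$, stronger than the $1/\sqrt{\varepsilon}$ that is all one extracts from the box-dimension inequality. The paper's approach has the advantage that it would adapt to situations where one knows $\underline{\dim}_B$ is large for reasons other than connectedness, but for the statement at hand your argument is cleaner. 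One cosmetic remark: your ``minor technicality'' about boundary points of balls is handled correctly---you only need $d(z_i,y_i)\geq\varepsilon/3$, which follows from $y_i\notin U_i$, and the containment $\overline{U_i}\subseteq\{y:d(z_i,y)\leq\varepsilon/3\}$ suffices for disjointness.
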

\begin{proof}
It is well known that the topological dimension of a nondegenerate continuum is no less than one (\cite[Proposition 1.3.3]{Coornaert}) and does not exceed its lower box dimension (\cite[Chapter VII]{HW}). Thus
\[ \underline{\dim}_{B}(X)=\liminf_{\varepsilon\rightarrow 0}\frac{\log S(\varepsilon)}{\log (1/\varepsilon)}\geq 1.\]
Then there exists $\varepsilon_0>0$ such that for any $\varepsilon \in (0,\varepsilon_0)$, $S(\varepsilon)> \frac{1}{\sqrt{\varepsilon}}$. Fix an $\varepsilon$-separated subset $\{x_1,\cdots, x_{S(\varepsilon)}\}$ of $X$. Thus the closed balls $\overline{B(x_1,\varepsilon/3)}, \cdots, \overline{B(x_{S(\varepsilon)},\varepsilon/3)}$ are disjoint. For each $i\in\{1,2,\cdots,S(\varepsilon)\}$, let $A_i$ be the component of $\overline{B(x_i,\varepsilon/3)}$ containing $x_i$. By Lemma \ref{bumping thm}, we have $A_i\cap Bd (B(x_i,\varepsilon/3))\neq\emptyset$. Hence $\text{diam}(A_i)\geq \frac{\varepsilon}{3}$. Thus these $A_i$'s satisfy the requirements.
\end{proof}

\section{Proof of the main theorem}
\medskip
The following lemmas will  be used in the subsequent proof.
\begin{lem}\cite[Lemma 2.1]{Kato}\label{clustering}
Let $(Y,d)$ be a compact metric pace. For any $\varepsilon>0$ and positive integer $k$, there is a positive integer $n=n(\varepsilon, k)\geq k$ such that if $y_1,y_2,\cdots,y_n$ are points of $X$, then there exists a point $y\in Y$ and $1\leq i(1)<i(2)<\cdots<i(k)\leq n$ such that $d(y, y_{i(j)})<\varepsilon$ for $j\in\{1,2,\cdots,k\}$.
\end{lem}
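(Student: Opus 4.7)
The plan is to prove this clustering lemma by combining total boundedness of a compact metric space with a straightforward pigeonhole argument, so the proof will be short and elementary.

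First, I will invoke compactness of $(Y,d)$: since $Y$ is totally bounded, there exist finitely many points $z_1,\ldots,z_m\in Y$, where $m=m(\varepsilon)$ depends only on $\varepsilon$ (and on $(Y,d)$), such that $Y=\bigcup_{j=1}^m B(z_j,\varepsilon)$. I will then \emph{define}
\[
n(\varepsilon,k) \;:=\; m(\varepsilon)\cdot(k-1)+1.
\]
Since $Y$ is nonempty we have $m\geq 1$, and therefore $n(\varepsilon,k)\geq k$, as required by the statement.

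Next, given arbitrary points $y_1,\ldots,y_n\in Y$, I assign to each $y_i$ some index $c(i)\in\{1,\ldots,m\}$ with $y_i\in B(z_{c(i)},\varepsilon)$. By the pigeonhole principle applied to the map $c\colon\{1,\ldots,n\}\to\{1,\ldots,m\}$, at least one fiber $c^{-1}(j_0)$ has cardinality at least $k$. Listing its elements as $i(1)<i(2)<\cdots<i(k)$ and putting $y:=z_{j_0}\in Y$ yields $d(y,y_{i(\ell)})<\varepsilon$ for every $\ell\in\{1,\ldots,k\}$, which is exactly the conclusion.

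There is no real obstacle here: the lemma is a purely combinatorial consequence of total boundedness, and the only minor verifications are the nonemptiness of the $\varepsilon$-net (ensuring $m\geq 1$, hence $n\geq k$) and the strict inequality $d(y,y_{i(\ell)})<\varepsilon$, which comes for free from using open balls in the cover.
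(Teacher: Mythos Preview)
Your argument is correct: total boundedness gives a finite $\varepsilon$-cover by $m$ balls, and with $n=m(k-1)+1$ points the pigeonhole principle forces at least $k$ of them into a single ball, whose center serves as $y$. The verification that $n\geq k$ and the strict inequality from using open balls are both fine.

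As for comparison with the paper: the paper does not supply its own proof of this lemma at all --- it is simply quoted from \cite[Lemma~2.1]{Kato} and used as a black box. Your proof is the natural elementary argument and is presumably what Kato had in mind as well.
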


\begin{lem}\cite[Lemma 2.2]{Kato}\label{boundary lemma}
Let $X$ be a compact metric space and let $U,V$ be open subsets of $X$ such that $\overline{V}\subseteq U$. If $A$ is a subcontinuum of $X$ such that $A\cap V\neq \emptyset$ and $A\setminus\overline{U}\neq\emptyset$, then there is a subcontinuum $B$ of $A\cap\overline{U}$ such that $B\cap V\neq \emptyset$ and $B\cap Bd(U)\neq\emptyset$.
\end{lem}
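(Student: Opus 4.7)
The plan is to apply the Boundary Bumping Theorem (Lemma~\ref{bumping thm}) to the continuum $A$ itself, taking as the ambient open set the relatively open subset $W := A\cap U$ of $A$. First I would fix a point $x\in A\cap V$, which exists by hypothesis; since $V\subseteq\overline{V}\subseteq U$, this $x$ automatically lies in $W$. I would then verify that $W$ is a nonempty proper open subset of $A$: openness is immediate because $U$ is open in $X$, nonemptiness is witnessed by $x$, and properness follows from $A\setminus\overline{U}\neq\emptyset$, since any $z\in A\setminus\overline{U}$ satisfies $z\notin U$ and hence $z\notin W$.

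Next, I would take $B$ to be the connected component of the closure $\overline{W}^{A}$ of $W$ in $A$ containing $x$. Because $A$ is closed in $X$, closure in $A$ agrees with closure in $X$, so $B\subseteq \overline{A\cap U}\subseteq A\cap\overline{U}$; as a component of a compact metric space, $B$ is closed, connected, and nonempty, hence a subcontinuum of $A\cap\overline{U}$. Since $x\in B$ and $x\in V$, the first required condition $B\cap V\neq\emptyset$ is immediate.

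For the second condition, applying Lemma~\ref{bumping thm} inside the continuum $A$ with the nonempty proper open set $W$ yields $B\cap Bd_{A}(W)\neq\emptyset$, where $Bd_{A}(W)$ denotes the boundary of $W$ in the subspace $A$. I would then observe the key inclusion $Bd_{A}(W)\subseteq Bd(U)$: indeed, a point $y\in \overline{W}^{A}\setminus W$ must satisfy $y\in\overline{U}$ (because $W\subseteq U$) and $y\notin U$ (for otherwise $y\in A\cap U=W$), so $y\in\overline{U}\setminus U\subseteq Bd(U)$. This gives $B\cap Bd(U)\neq\emptyset$ and completes the argument. The only subtle step is the passage between the relative boundary $Bd_{A}(W)$ and the ambient boundary $Bd(U)$, and I expect no genuine obstacle beyond carefully distinguishing relative and ambient topology.
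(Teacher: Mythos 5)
Your argument is correct: reducing to the Boundary Bumping Theorem applied inside the continuum $A$ with the relatively open set $W=A\cap U$, and then checking $Bd_A(W)\subseteq Bd(U)$, is exactly the standard proof of this lemma (the paper itself gives no proof, citing Kato's Lemma 2.2, whose argument runs along the same lines). The relative-versus-ambient boundary step, which is the only delicate point, is handled correctly, since $A$ closed in $X$ makes closures in $A$ and in $X$ agree and any point of $\overline{W}^A\setminus W$ lies in $\overline{U}\setminus U=Bd(U)$.
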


\begin{proof}[Proof of Theorem \ref{main thm}]
To the contrary, assume there is an expansive action of $G$ on $X$. Then, by Proposition \ref{compatible metric}, there exists $\alpha>1$ and a compatible metric $D$ on $X$ such that for any positive integer $n$ and any two distinct points $x,y\in X$ satisfying $D(x,y)\geq \alpha^{-n}$, we have
\[\max_{g\in G, |g|\leq n} D(gx,gy)\geq \frac{1}{4\alpha}.\]

Let $A$ be any nondegenerate subcontinuum of $X$. By Lemma \ref{disjoint subcontinua}, for sufficiently large $n$, there is a set $\mathcal{K}_n$ of disjoint subcontinua of $A$ such that $|\mathcal{K}_n|> \sqrt{\alpha^{n}/3}$ and for every $K\in\mathcal{K}_n$, $\text{diam}_D(K)\geq\alpha^{-n}$. By Lemma \ref{compatible metric}, for every $K\in\mathcal{K}_n$, there exists $g\in G$ with $|g|\leq n$ such that $\text{diam}_D(g(K))\geq\frac{1}{4\alpha}$.

Let $S$ be a finite set of generators of $G$. Since the growth function $\beta(G, S; n)$ is subexponential (i.e. $\lim_{n\rightarrow\infty}\sqrt[n]{\beta(G, S; n)}\leq1$),  for sufficiently large $n$, we have
\[\beta(G, S; n)\leq (\sqrt[3]{\alpha})^n.\]

By pigeonhole principle, we have the following claim.

\newtheorem*{cl}{Claim A}
\begin{cl}
For every sufficiently large $n$, there exists $g_n\in G$ with $|g_n|\leq n$ such that there are at least $a_n\triangleq\frac{\sqrt{\alpha^n/3}}{(\sqrt[3]{\alpha})^n}=\frac{\alpha^{n/6}}{\sqrt{3}}$ subcontinua of $\mathcal{K}_n$, the diameters of whose image under $g_n$ are no less than $\frac{1}{4\alpha}$.
\end{cl}

{\color{red}(The following argument is modelled on Kato's proof of Theorem 3.1 in \cite{Kato}.)}
\medskip

In the following, the underlying metric on $X$ is always $D$ .

Set $M=C(X)$ and choose a sequence $\varepsilon_1>\varepsilon_2>\cdots$ of positive numbers such that $\lim_{i\rightarrow\infty}\varepsilon_i=0$. For each $k$ and $\varepsilon_k$, take a positive number $n_k=n(\varepsilon_k,k)$ as Lemma \ref{clustering}, where we assume $Y=C(X)$.

Let $A$ be any nondegenerate subcontinuum of X. Note that $a_n\rightarrow\infty$ as $n\rightarrow\infty$.  Take an increasing sequence $m_1<m_2<m_3<\cdots$ of positive integers such that both $a_{m_k}\geq n_k$ and $m_k\geq n_k$ for each $k\in\mathbb{N}$. By  Claim A, for each $k\in\mathbb{N}$, we can choose pairwise disjoint nondegenerate subcontinua $B_1,B_2,\cdots,B_{m_k}$ of $A$ such that for each $i=1,2,\cdots,m_k$,
\begin{equation}\label{eq1}
\text{diam}(h_k(B_i))\geq \frac{1}{4\alpha},
\end{equation}
for some $h_k\in G$ with $|h_k|\leq m_k$. By the choice of $m_k$, there is a point $B(k)$ of $C(X)$ and $1\leq i_1<i_2<\cdots<i_k\leq m_k$ such that
\begin{equation}\label{eq2}
d_{H}(B(k), h_k(B_{i_j}))<\varepsilon_k,~~~\text{for }~~j=1,2,\cdots,k.
\end{equation}

By the compactness of $C(X)$, we may assume that $\{B(k)\}$ converges to a point $A_1$ of $C(X)$. Then $\text{diam}(A_1)\geq \frac{1}{4\alpha}$, by (\ref{eq1}) and (\ref{eq2}). Moreover, it is clear that $A_1\in M_1=\widetilde{M}$. Hence $M_1$ contains a nondegenerate subcontinuum. Now we shall show that $M_1$ satisfies the following condition $(\Xi_1)$:
\newtheorem*{cond1}{Condition $(\Xi_1)$}
\begin{cond1}
If $C\in M_1$ is nondegenerate, then for any open sets $U,V$ of $X$ satisfying $\overline{V}\subset U,~ C\cap V\neq \emptyset$ and $C\setminus \overline{U}\neq\emptyset$, there exists $D\in M_1$ such that $D\cap\overline{V}\neq\emptyset,~D\subset C\cap\overline{U}$ and $D\cap Bd(U)\neq \emptyset$.
\end{cond1}
Since $C\in M_1$, for each $k$ we can choose disjoint nondegenerate $D_1,D_2,\cdots, D_{n_k}\in C(X)$ such that $d_{H}(C,D_i)<\varepsilon_k$ for each $i$. We may assume
that $D_i\cap\overline{V}\neq\emptyset$ and $D_i\setminus \overline{U}\neq\emptyset$ for each $i$. By Lemma \ref{boundary lemma}, for each $i=1,2,\cdots, n_k$, there exists a subcontinuum $E_i$ of $D_i$ such that $E_i\subset \overline{U},~E_i\cap V\neq \emptyset$ and $E_i\cap Bd(U)\neq\emptyset$. By Lemma \ref{clustering}, there is a point $E(k)$ of $C(X)$ and $1\leq i_1<i_2<\cdots<i_k\leq n_k$ such that $d_H(E(k), D_{i_j})<\varepsilon_k$ for each $j=1,2,\cdots,k$. Furthermore, we may assume that $\{E(k)\}$ converges to a point $D$ of $C(X)$. Then it is easy to see that $D\subset C\cap \overline{U},~D\cap\overline{V}\neq\emptyset$ and $E\cap Bd(U)\neq \emptyset$. Clearly, $D\in M_1$. Thus $M_1$ satisfies Condition $(\Xi_1)$.\\

For a countable ordinal $\lambda$, we  assume that for every $\alpha<\lambda$, $M_{\alpha}$ contains a nondegenerate subcontinuum and satisfies Condition $(\Xi_\alpha)$. We shall show that $M_{\lambda}$ has the same properties. We divide it into the following two cases.
\begin{itemize}
  \item [(I)] $\lambda=\alpha+1$. Note that $M_{\alpha}$ satisfies Condition $(\Xi_\alpha)$. By the argument similar to the above, we can show that $M_{\lambda}$ contains a nondegenerate subcontinuum and satisfies Condition $(\Xi_\lambda)$.
  \item [(II)] $\lambda$ is a limit ordinal. In this case, take a sequence $\alpha_1<\alpha_2<\cdots$ of countable ordinals such that $\lim \alpha_i=\lambda$. Note that for each $\alpha$, $M_{\alpha}$ is $G$-invariant by Proposition \ref{inv}. Then by Claim A, for each $i$ there is $A_i\in M_{\alpha_i}$ such that $\text{diam}(A_i)\geq\frac{1}{4\alpha}$. Furthermore, we may assume that $\{A_i\}$ converges to a point $A_\lambda$ of $C(X)$. This implies
      \[ A_\lambda\in\bigcap_{\alpha<\lambda}M_{\alpha}
      =M_{\lambda}.\]
      Note that $\text{diam}(A_\lambda)\geq \frac{1}{4\alpha}$. By Lemma \ref{clustering}, we can prove that $M_{\lambda}$ satisfies Condition $(\Xi_\lambda)$.
\end{itemize}

Consequently, $M_{\alpha}\neq \emptyset$ for any contable ordinal $\alpha$. By Theorem \ref{suslin}, $X$ cannot be Suslinian. This is a contradiction. Hence we complete the proof.
\end{proof}

\section{Appendix}

\medskip
Now we write the proof of Proposition \ref{compatible metric} in the framework of general group actions, which is only a repeat of the proof of \cite[Lemma 4.4]{MT}.

\medskip
Fix an expansive constant $c>0$.

\medskip
\begin{lem}\label{uniform bound}
For any $\varepsilon>0$, there is an integer $n=n(\varepsilon)>0$ such that for any $x,y\in X$ satisfying $d(x,y)\geq \varepsilon$, we have
\[\max_{g\in G, |g|\leq n}d(gx,gy)\geq c.\]
\end{lem}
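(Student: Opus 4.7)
The plan is to argue by contradiction via the compactness of $X \times X$, exploiting that for each fixed $g \in G$ the map $\phi(g)$ is continuous and that the word ball $\{g \in G : |g| \leq n\}$ is finite for each $n$. The content of the lemma is to upgrade pointwise expansiveness, which provides for each pair of distinct points a single $g$ with $d(gx,gy) > c$ but no control on $|g|$, into a uniform bound on $|g|$ valid over all pairs with $d(x,y) \geq \varepsilon$.

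I would fix $\varepsilon > 0$ and suppose the conclusion fails. Then for each $n \in \mathbb{N}$ one can select $(x_n, y_n) \in X \times X$ with $d(x_n, y_n) \geq \varepsilon$ while
\[ d(g x_n, g y_n) < c \text{ for every } g \in G \text{ with } |g| \leq n. \]
By the compactness of $X \times X$, after passing to a subsequence I may assume $(x_n, y_n)$ converges to some $(x, y) \in X \times X$. The lower-bound inequality $d(x_n, y_n) \geq \varepsilon$ is closed, so it survives the limit to give $d(x, y) \geq \varepsilon > 0$; in particular $x \neq y$.

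Next I would fix an arbitrary $g \in G$. For all $n \geq |g|$ the construction yields $d(g x_n, g y_n) < c$; since $\phi(g)$ is a homeomorphism, $g x_n \to g x$ and $g y_n \to g y$, so letting $n \to \infty$ produces $d(g x, g y) \leq c$. As $g$ was arbitrary, $\sup_{g \in G} d(g x, g y) \leq c$, contradicting expansiveness at the distinct pair $(x, y)$.

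There is no real obstacle; the argument is a routine compactness maneuver. The only subtlety I will keep in mind is to fix $g$ first and then let $n \to \infty$, so that the inequality invoked for each $g$ is only the single one indexed by that $g$, thereby avoiding any illegitimate interchange of $\sup$ and limit.
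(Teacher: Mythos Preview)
Your proof is correct and follows essentially the same compactness-by-contradiction argument as the paper: negate the statement, extract a convergent subsequence of bad pairs, and observe that the limit pair violates expansiveness. Your write-up is in fact slightly more careful in spelling out the ``fix $g$ first, then let $n\to\infty$'' step that the paper leaves implicit.
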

\begin{proof}
Assume the Lemma is not true. There exist $\varepsilon_0>0$ and $x_{k}, y_k\in X~(k\geq 1)$ satisfying
\[ d(x_k,y_k)\geq \varepsilon_0,~~ \max_{g\in G, |g|\leq k}d(gx_k,gy_k)< c.\]
By the compactness of $X$, we may assume that $x_k\rightarrow x$ and $y_k\rightarrow y$ as $k\rightarrow\infty$.
Then $d(x,y)\geq\varepsilon_0$ and $\sup_{g\in G}d(gx,gy)\leq c$. This contradicts the expansiveness. Thus we complete the proof.
\end{proof}
By Lemma \ref{uniform bound}, there is an integer $l>0$ such that for any $x,y\in X$ satisfying $d(x,y)\geq \frac{c}{2}$, we have $\max_{g\in G, |g|\leq l}d(gx,gy)\geq c$. Fix a real number $\alpha>1$ such that $\alpha^l<2$.

Let $x,y\in X$. If $x\neq y$, define
\[ n(x,y)=\min\{ n\in\mathbb{N}:~\exists g\in G, |g|\leq n ~~\text{such that}~~d(gx,gy)\geq c\}.\]
Otherwise, set $n(x,y)=\infty$. Set $\rho(x,y)=\alpha^{-n(x,y)}$.
\begin{lem}\label{rho}
The function $\rho$ satisfies:
\begin{itemize}
  \item [(1)] $\rho(x,y)=\rho(y,x)$;
  \item [(2)] $\rho(x,y)\geq 0$ and $\rho(x,y)= 0$ if and only if $x=y$;
  \item [(3)] $\rho(x,z)\leq 2\max\{\rho(x,y),~\rho(y,z)\}$;
  \item [(4)] if $d(x_k,x)\rightarrow 0$ and $d(y_k,y)\rightarrow 0$ as $k\rightarrow\infty$, then
      \[\limsup_{n\rightarrow\infty} \rho(x_k,y_k)\leq \rho(x,y);\]
  \item [(5)] $\rho$ is compatible with the topology of $X$, which means that the balls (w.r.t $\rho$)
  \[ B_{\rho}(x,r)=\{y\in X:~\rho(x,y)<r\}~~(x\in X,r>0)\] form an open base of the topology of $X$.
\end{itemize}
\end{lem}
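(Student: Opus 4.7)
The plan is to verify the five assertions in order, treating (1) and (2) as immediate consequences of the definitions and of expansiveness, and concentrating the real work on the quasi-ultrametric inequality (3) and the compatibility statement (5).

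For (1), the symmetry $d(gx,gy) = d(gy,gx)$ forces $n(x,y) = n(y,x)$. For (2), $\rho(x,y) \geq 0$ is built in, and $\rho(x,y) = 0$ is equivalent to $n(x,y) = \infty$; when $x = y$ this holds by convention, and when $x \neq y$ the expansive constant $c$ produces some $g \in G$ with $d(gx,gy) > c$, so $n(x,y)$ is finite and $\rho(x,y) > 0$.

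The key step is (3). I would first prove the integer-level inequality
\[ n(x,z) \geq \min\{n(x,y), n(y,z)\} - l, \]
where $l$ is the integer fixed before the lemma, and then pass to $\rho$ using $\alpha^l < 2$. Writing $m$ for the minimum on the right, the idea is to pick any $h \in G$ with $|h| \leq m - l - 1$ and observe that for every $g \in G$ with $|g| \leq l$ we have $|gh| \leq m - 1$, so by the defining property of $n(x,y)$ and $n(y,z)$ both $d(ghx, ghy) < c$ and $d(ghy, ghz) < c$. The contrapositive of the defining property of $l$ then upgrades these to $d(hx, hy) < c/2$ and $d(hy, hz) < c/2$, and the triangle inequality for $d$ yields $d(hx, hz) < c$. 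Since $h$ was arbitrary in the given range, $n(x,z) > m - l - 1$, i.e.\ $n(x,z) \geq m - l$, and (3) follows. I expect this to be the main obstacle: the factor $2$ on the right of (3) has to emerge exactly from the condition $\alpha^l < 2$, and the interplay between word length in $G$ and the characterization of $l$ is essential — no purely metric triangle inequality is available for $\rho$.

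For (4), I would fix any integer $n < n(x,y)$ and use continuity of the action: for each of the finitely many $g \in G$ with $|g| \leq n$, $d(gx_k, gy_k) \to d(gx, gy) < c$, so $d(gx_k, gy_k) < c$ for all large $k$; hence $n(x_k, y_k) > n$ eventually and $\rho(x_k, y_k) \leq \alpha^{-n-1}$. Letting $n \to n(x,y) - 1$ (or $n \to \infty$ when $x = y$) yields $\limsup_k \rho(x_k, y_k) \leq \rho(x,y)$. For (5), the upper semicontinuity furnished by (4) immediately implies that $\{(u,v) \in X \times X : \rho(u,v) < r\}$ is open in the product topology, so every $\rho$-ball is open in the original topology. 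For the reverse inclusion I would argue by contradiction: if some $\rho$-ball around $x$ failed to shrink to $\{x\}$ in the original topology as the radius tends to $0$, compactness of $X$ would produce a sequence $y_k \to y \neq x$ with $\rho(x, y_k) \to 0$. The defining property of $\rho$ then gives $d(gx, gy_k) < c$ for all $|g| \leq k$; fixing $g$ and letting $k \to \infty$ yields $d(gx, gy) \leq c$ for every $g \in G$, contradicting the expansiveness of the action. Hence the $\rho$-balls form a neighborhood base at every point, completing the compatibility statement.
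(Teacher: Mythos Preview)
Your proof is correct and follows essentially the same strategy as the paper, with two minor but noteworthy variations.

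For (3), the paper argues in the forward direction: set $m=n(x,z)$, pick $g$ with $|g|\le m$ realizing $d(gx,gz)\ge c$, split by the triangle inequality to get (say) $d(gx,gy)\ge c/2$, and then apply the defining property of $l$ directly to obtain $n(x,y)\le m+l$, whence $\rho(x,y)\ge\alpha^{-l}\rho(x,z)>\rho(x,z)/2$. You instead prove the equivalent inequality $n(x,z)\ge\min\{n(x,y),n(y,z)\}-l$ via the contrapositive of the $l$-property. Both arguments are short and yield the same bound; the paper's version avoids having to note the (harmless) edge case $m\le l$.

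For (4), your argument is actually cleaner than the paper's. The paper passes to a subsequence with constant $n(x_k,y_k)=m$, picks witnesses $g_k$, and uses a uniform-continuity $\varepsilon$--$\delta$ argument to push the inequality to the limit. You simply fix $n<n(x,y)$, use that $\{g:|g|\le n\}$ is finite, and let continuity of each individual $g$ force $n(x_k,y_k)>n$ eventually. For (5), the paper invokes the earlier ``uniform bound'' lemma (Lemma~\ref{uniform bound}) to get $B_\rho(x,r)\subset B_d(x,R)$, whereas your compactness contradiction is essentially an inline reproof of that lemma; the content is the same.
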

\begin{proof}
It is clear for $(1)$ and $(2)$. For (3), we may assume $x\neq z$ and set $m=n(x,z)$. Then there exists $g\in G$ with $|g|\leq m$ such that $d(gx,gz)\geq c$. By the triangle inequality, either $d(gx,gy)\geq c/2$ or $d(gy,gz)\geq c/2$. We may assume the former. By the choice of $l$, there exists $h\in G$ with $|h|\leq l$ such that $d(hg(x),hg(y))\geq c$. Thus $n(x,y)\leq |hg|\leq m+l$. Then
\[\rho(x,y)=\alpha^{-n(x,y)}\geq \alpha^{-m}\alpha^{-l}>\frac{\rho(x,z)}{2},\]
by noting that $\alpha^l<2$.

(4) is equivalent to show $\liminf_{k\rightarrow \infty}n(x_k,y_k)\geq n(x,y)$. We may assume $\liminf_{k\rightarrow \infty}n(x_k,y_k)=m<\infty$, otherwise it is trivial.  By passing to some subsequence, we may assume that $\lim_{k\rightarrow \infty}n(x_k,y_k)=m$. Furthermore, we may assume that $n(x_k,y_k)=m$ for each $k$. Then for each $k$, there exists $g_{k}\in G, |g_k|\leq m$ such that $d(g_kx_k,g_ky_k)\geq c$. By uniform continuity, for any $\varepsilon>0$, there exists $\delta>0$ such for any $x,y\in X$ satisfying $d(x,y)<\delta$, we have for any $g\in G, |g|\leq m$, $d(gx,gy)<\varepsilon$. Noting that $\{x_k\}$ and $\{y_k\}$ are Cauchy sequences, there is $N>0$ such that for any $i,j\geq N$, $d(x_i,x_{j})<\delta$ and $d(y_i,y_{j})<\delta$; hence $d(gx_i,gx_j)<\varepsilon$ and $d(gy_i,gy_j)<\varepsilon$, for any $g\in G$ with $|g|\leq m$. Then for any $k\geq N$,
\begin{equation*}
d(g_{N}x_k, g_{N}y_k)\geq d(g_{N}x_N, g_{N}y_N)-d(g_{N}x_N, g_{N}x_k)-d(g_{N}y_N, g_{N}y_k)> c-2\varepsilon.
\end{equation*}
Letting $k$ tend to infinity we have $d(g_Nx,g_Ny)\geq c-2\varepsilon$. Thus $n(x,y)\leq m$, since $\varepsilon$ is arbitrary. So (4) holds.

On the one hand, the property (4) implies that all balls $B_{\rho}(x,r)$ are open with respect to the metric $d$. On the other hand,  by Lemma \ref{uniform bound}, for any $x\in X$ and $R>0$ there exists $r>0$ satisfying $B_{\rho}(x,r)\subset B_{d}(x, R)$. Thus (5) holds.
\end{proof}

In order to show the existence of the desired metric, we need the following Frink's metrization lemma.
\begin{lem}\cite[pp.134-135]{Frink}\cite[Theorem 4.1]{MT}\label{Frink}
Let $X$ be a set and $\rho$ be a nonnegative function on $X\times X$ satisfying
\begin{itemize}
  \item [(1)] $\rho(x,y)=\rho(y,x)$ for all $x,y\in X$;
  \item [(2)] $\rho(x,y)= 0$ if and only if $x=y$;
  \item [(3)] $\rho(x,z)\leq 2\max\{\rho(x,y),~\rho(y,x)\}$ for all $x,y,z\in X$.
\end{itemize}
Then there exists a distance function $D$ on $X$ satisfying for all $x,y\in X$,
\[\frac{1}{4}\rho(x,y)\leq D(x,y)\leq \rho(x,y).\]
\end{lem}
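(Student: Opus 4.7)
The plan is to construct $D$ via Frink's standard chain (or ``path'') construction: for $x, y \in X$, set
\[
D(x, y) := \inf\Bigl\{\sum_{i=0}^{n-1} \rho(x_i, x_{i+1}) : n \geq 1,\ x_0 = x,\ x_n = y,\ x_1, \ldots, x_{n-1} \in X\Bigr\}.
\]
Symmetry of $D$ follows from (1) by reversing chains, the triangle inequality follows from concatenating an $x$-to-$y$ chain with a $y$-to-$z$ chain, and $D(x,x) = 0$ is immediate from the constant chain. The upper bound $D(x,y) \leq \rho(x,y)$ is witnessed by the length-one chain $(x, y)$. All of the substance of the proof therefore lies in the lower bound.

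To establish $D(x, y) \geq \tfrac{1}{4}\rho(x, y)$, I would prove by strong induction on the chain length $n$ that
\[
\rho(x_0, x_n) \leq 4 \sum_{i=0}^{n-1} \rho(x_i, x_{i+1}) \qquad \text{for every chain } x_0, \ldots, x_n \text{ in } X;
\]
taking the infimum over chains then gives the desired inequality and, in particular, positivity of $D$ on distinct pairs. The case $n = 1$ is immediate. For $n \geq 2$, set $S := \sum_{i=0}^{n-1} \rho(x_i, x_{i+1})$, which may be assumed positive, and choose $k$ to be the largest index with $\sum_{i < k} \rho(x_i, x_{i+1}) \leq S/2$; by maximality also $\sum_{i > k} \rho(x_i, x_{i+1}) < S/2$, while trivially $\rho(x_k, x_{k+1}) \leq S$. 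The induction hypothesis, applied to the two strictly shorter subchains $x_0, \ldots, x_k$ and $x_{k+1}, \ldots, x_n$, furnishes bounds on $\rho(x_0, x_k)$ and $\rho(x_{k+1}, x_n)$. Two applications of the weak triangle inequality (3) — first to the triple $(x_k, x_{k+1}, x_n)$ and then to $(x_0, x_k, x_n)$ — combine these with the estimate $\rho(x_k, x_{k+1}) \leq S$ to control $\rho(x_0, x_n)$ in terms of $S$.

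The main obstacle is the bookkeeping of multiplicative constants so that the final coefficient on $S$ comes out correctly: each invocation of (3) costs a factor of $2$, and the split at $k$ must be chosen precisely (namely, as the last index keeping the leading partial sum below $S/2$) so that both subchains carry at most half of the total budget, and the doublings balance against the halvings. Once the inductive inequality is established, $D$ inherits positivity on distinct pairs and is thus a genuine metric, satisfying the claimed sandwich $\tfrac{1}{4}\rho \leq D \leq \rho$.
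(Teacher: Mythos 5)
The paper does not prove this lemma at all --- it is quoted from Frink and from \cite[Theorem 4.1]{MT} --- so your attempt can only be judged on its own terms. Your construction (the chain infimum $D$) and your target inequality $\rho(x_0,x_n)\le 4\sum_{i}\rho(x_i,x_{i+1})$ are indeed the standard route, and the upper bound, symmetry and triangle inequality are fine. The problem is the inductive step, which as described does not close, and not merely for bookkeeping reasons. With your split ($k$ the last index with leading partial sum $\le S/2$) you know: head sum $\le S/2$, tail sum $< S/2$, middle link $\le S$. Your two nested applications of (3) give
\[
\rho(x_0,x_n)\le 2\max\bigl\{\rho(x_0,x_k),\,2\max\{\rho(x_k,x_{k+1}),\rho(x_{k+1},x_n)\}\bigr\}
=\max\bigl\{2\rho(x_0,x_k),\,4\rho(x_k,x_{k+1}),\,4\rho(x_{k+1},x_n)\bigr\},
\]
so the head is doubled once but the tail is doubled twice. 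Feeding in the inductive bound $4\cdot(S/2)=2S$ for each subchain yields only $\max\{4S,\,4S,\,8S\}=8S$, not $4S$; and if you try to run the same induction with a larger constant $C$ the offending term becomes $4\cdot C\cdot(S/2)=2CS>CS$, so no constant whatsoever makes this particular split-and-combine scheme self-consistent. ``Both subchains carry at most half of the budget'' is simply not enough: the piece sitting inside the nested application of (3) must carry at most a quarter of it, and your choice of $k$ does not guarantee that.

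This is the genuinely delicate point of Frink's lemma (note that the constant $1/4$ is sharp --- e.g.\ four points with consecutive values $\varepsilon,a,\varepsilon$ and $\rho(x_0,x_3)=4a$ satisfy (1)--(3) --- so there is no slack for a lossy step). A correct argument has to be finer than the half--half split: for instance one can prove by induction the weighted inequality $\rho(x_0,x_n)\le 2\rho(x_0,x_1)+4\sum_{i=1}^{n-2}\rho(x_i,x_{i+1})+2\rho(x_{n-1},x_n)$, choosing the splitting point adaptively (and treating separately the case where a single link dominates), which is essentially what Frink's original proof and \cite[Theorem 4.1]{MT} do. As written, your proof establishes the sandwich only with an unspecified (in fact unobtainable, by your scheme) constant, so the key lower bound $D\ge\frac14\rho$ remains unproved.
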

By the properties (1,2,3) of Lemma \ref{rho} and Lemma \ref{Frink}, there is a distance function $D$ on $X$ satisfying
$\frac{1}{4}\rho(x,y)\leq D(x,y)\leq \rho(x,y)$.
The property $(5)$ implies $D$ is a compatible metric on $X$.

\begin{proof}[Proof of Proposition \ref{compatible metric}]
It remains to show that the above metric $D$ satisfies that for any positive integer $n$ and any two distinct points $x,y\in X$ satisfying $D(x,y)\geq \alpha^{-n}$, we have
\[\max_{g\in G, |g|\leq n} D(gx,gy)\geq \frac{1}{4\alpha}.\]
Since $D(x,y)\leq \rho(x,y)$, we have $\rho(x,y)\geq \alpha^{-n}$ which implies that $n(x,y)\leq n$. Thus there exists $h\in G$ with $|h|\leq n$ such that $d(hx,hy)\geq c$. Hence $n(hx,hy)=1$. By the definition of $\rho$, $\rho(hx,hy)=\frac{1}{\alpha}$. Therefore,
\[\max_{g\in G, |g|\leq n} D(gx,gy)\geq D(hx,hy)\geq \frac{\rho(hx,hy)}{4}=\frac{1}{4\alpha}.\]
This completes the proof.
\end{proof}


\begin{thebibliography}{99}
\bibitem{CC} T. Ceccherini-Silberstein and M. Coornaert. \textit{Cellular Automata and Groups} (Springer, Berlin, 2010), Springer Monogr. Math.

\bibitem{Coornaert}  M. Coornaert. \textit{Topological dimension and dynamical systems}. Universitext. Springer, Cham, 2015. Translated and revised from the 2005 French original.

\bibitem{Frink} A. H. Frink.  Distance functions and the metrization problem. \textit{Bull. Amer. Math. Soc}., 43(2):133-142, 1937.

\bibitem{HW} W. Hurewicz and H. Wallman. \textit{Dimension Theory}. Princeton Mathematical Series, vol. 4. Princeton University Press, Princeton, N. J., 1941.

\bibitem{Kato} H. Kato. The nonexistence of expansive homeomorphisms of Suslinian continua. \textit{J. Math. Soc. Japan}, 42 (1990), no. 4, 631-637.

\bibitem{MT} T. Meyerovitch and M. Tsukamoto. Expansive multiparameter actions and mean dimension. \textit{Trans. Amer. Math. Soc.}  371 (2019), no. 10, 7275-7299.

\bibitem{Nadler} S.B. Nadler, Jr. \textit{Continuum theory}, volume 158 of \textit{Monographs and Textbooks in Pure and Applied Mathematics}. Marcel Dekker, Inc., New York, 1992. An introduction.

\bibitem{WSXX} S.H. Wang, E.H. Shi, H. Xu and Z.W. Xie. Sensitive group actions on regular curves of almost $\leq n$ order,  arXiv:2102.00401.
\end{thebibliography}
\end{document}